\documentclass[11pt]{amsart}

\usepackage[T1]{fontenc}
\usepackage[utf8]{inputenc}
\usepackage{lmodern}
\usepackage{amsmath,amssymb,amsthm,mathtools}
\usepackage{enumitem}
\usepackage[colorlinks=true,linkcolor=blue,citecolor=blue,urlcolor=blue]{hyperref}
\usepackage{microtype}

\title[An overlap construction for relative linear extension ratios]{An overlap construction for relative linear extension ratios}

\author{Maseeh Ghodsi}
\address{Independent Researcher}
\email{maseeh.ghodsi@gmail.com}

\date{July 9, 2026}

\subjclass[2020]{Primary 06A07; Secondary 05A15, 11A55}
\keywords{Linear extensions, posets, continued fractions, relative ratios, sum of partial quotients}

\newtheorem{theorem}{Theorem}[section]
\newtheorem{lemma}[theorem]{Lemma}
\newtheorem{proposition}[theorem]{Proposition}

\theoremstyle{definition}

\theoremstyle{remark}
\newtheorem{remark}[theorem]{Remark}

\newcommand{\eps}{\varepsilon}

\newcommand{\ZZ}{\mathbb{Z}}

\newcommand{\cL}{\mathcal{L}}
\newcommand{\Min}{\operatorname{Min}}

\begin{document}

\begin{abstract}
Chan and Pak introduced the relative linear extension ratio
\(\rho(P,x)=e(P)/e(P-x)\), where \(e(P)\) is the number of linear
extensions of a finite poset \(P\), and let \(\nu(c,d)\) be the least
number of elements of a poset that realizes \(\rho(P,x)=d/c\).  They
proved that \(\nu(c,d)\le d/c+O(\log d\log\log d)\) for \(d\ge 3c\), and
asked whether the hypothesis \(d\ge 3c\) can be relaxed to
\(d\ge(1+\eps)c\) or removed.  We prove the fixed-gap form of this
question: for every fixed \(\eps>0\),
\[
        \nu(c,d)\le \frac{d}{c}+O_\eps(\log d\log\log d)
        \qquad\text{whenever } d\ge(1+\eps)c,
\]
and the implied constant is absolute once \(d\ge 2c\).  The new
ingredient is a one-element overlap construction: if \(x\) is minimal in
\(P\) and \(y\) is minimal in \(Q\), then there is a poset \(R\) with
\(|R|=|P|+|Q|-1\) and an element \(z\) such that
\[
        \rho(R,z)=\rho(P,x)+\rho(Q,y)-1.
\]
Together with the continued-fraction construction of Chan and Pak and
Rukavishnikova's tail bound for sums of partial quotients, this removes
the factor \(3\) in their range.  We also show that the fixed-gap
hypothesis is essentially optimal for this construction.  In the range
\(1<d/c<2\), with \(h=d-c\), the size bound the construction can certify
is at least \(\lfloor c/h\rfloor\), so the method reaches the stated
error term only when \(h\) is at least of order \(c/(\log c\log\log c)\).
The remaining obstruction to removing the hypothesis is a short-interval
problem for sums of partial quotients, which we describe.  The deductive
part of the argument has been checked with the Lean proof assistant.
\end{abstract}

\maketitle

\section{Introduction}

Let \(P\) be a finite poset, and write \(e(P)\) for the number of its
linear extensions.  Chan and Pak recently developed a connection between
linear extensions and continued fractions, producing small posets with
prescribed linear-extension data \cite{ChanPak2024}.  Their work builds
on that of Kravitz and Sah on the possible linear extension numbers of
\(n\)-element posets \cite{KravitzSah2021}, and belongs to a broader
inverse-enumeration theme in which continued fractions supply compact
combinatorial constructions; see also
\cite{ChanPakSurvey2023,ChanKontorovichPak2024}.

The relative version of the problem reads as follows.  For \(x\in P\), set
\[
        \rho(P,x):=\frac{e(P)}{e(P-x)},
\]
where \(P-x\) is the poset obtained by deleting \(x\).  For positive
integers \(c,d\) with \(c\le d\), let
\[
        \nu(c,d):=\min\bigl\{\,|P| : \text{there is } x\in P
        \text{ with } \rho(P,x)=d/c\,\bigr\}.
\]
As \(\rho(P,x)\) is a rational number, \(\nu(c,d)\) depends only on the
value \(d/c\).  Chan and Pak proved the following relative analogue of
the Kravitz and Sah upper bound.

\begin{theorem}[Chan and Pak {\cite[Theorem 1.8]{ChanPak2024}}]
\label{thm:chan-pak}
For all positive integers \(c,d\) with \(d\ge 3c\),
\[
        \nu(c,d)\le \frac{d}{c}+O(\log d\log\log d).
\]
\end{theorem}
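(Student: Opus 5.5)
This is Chan and Pak's theorem, which the paper only cites, so the plan below reconstructs their argument. The plan is to realize \(d/c\) by continued-fraction posets in three stages. First build a small poset whose relative ratio runs through a continued-fraction recursion. Then choose a good continued-fraction expansion of a number tied to \(d/c\). Finally bound the total size by the sum of partial quotients.

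\emph{Step 1 (local operations).} We need two elementary operations on a pair \((P,x)\) with \(x\) minimal, each changing \(\rho\) in a controlled way.
\begin{itemize}
\item Adjoining a new global minimum or a chain element below \(x\) acts on \(\rho\) by an integer translation, adding about \(a\) at the cost of \(a\) new elements.
\item Attaching a suitable comparability pattern inverts the ratio in Möbius form, \(\rho\mapsto 1+1/(\rho-1)\) or similar.
\end{itemize}
Checking this amounts to counting linear extensions by the position of \(x\). Iterating gives, for any expansion \([a_0;a_1,\dots,a_k]\), a poset with \(\rho=[a_0;a_1,\dots,a_k]\) (suitably shifted) using \(\sum a_i+O(k)\) elements.

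\emph{Step 2 (choice of expansion).} Write \(d/c=m+r/c\) with \(m=\lfloor d/c\rfloor\ge3\). The integer part \(m\) contributes \(d/c+O(1)\) elements via the translation operation. For the fractional part we need an expansion of \(r/c\), or of a nearby auxiliary fraction, whose partial quotients sum to \(O(\log d\log\log d)\). Arbitrary \(r/c\) can have huge partial quotients, so we use the Kravitz–Sah trick. Write the target as a combination \(b/c'\) with many admissible numerators \(b\) in a range of length about \(c\), and choose one whose continued fraction has small partial-quotient sum. Larkin's or Rukavishnikova's averaging bound on \(\sum a_i\) over \(b\) coprime to \(c'\) guarantees such a \(b\) with sum \(O(\log c\log\log c)\). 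The slack \(d\ge3c\) is exactly what lets the extra integer room absorb the correction from replacing the target by the auxiliary fraction. Concretely, one writes \(d/c=(m-2)+\) (a ratio realized via two pieces) and routes the leftover through the free choice of \(b\).

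\emph{Step 3 (assembly and counting).} Combine the pieces, check that the final element is the deleted \(x\) with the correct ratio, and sum the sizes to get \(d/c+O(\log d\log\log d)\). I expect Step 2 to be the main obstacle. Two things must be arranged there: the family of auxiliary fractions has to be parametrized so that every member genuinely yields exactly \(d/c\) after the integer translation, and that family must still be dense enough for the averaging theorem on partial quotients to apply. The hypothesis \(d\ge3c\) enters precisely here.
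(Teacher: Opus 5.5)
There is a genuine gap: the proposal never provides a way to \emph{combine} two ratios.

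\textbf{No freedom in a single continued fraction.} The only operations in Step~1 are a unit translation and a M\"obius inversion. Iterating them realizes one continued fraction, and its value must equal the prescribed number $d/c$. A fixed rational has essentially one such expansion, since the two operations just run the Euclidean algorithm on it. So in Step~2 there is no free numerator $b$ to choose. With $d=kc+h$, the size is forced to be about $k+s(h/c)$, which can be of order $c$; for instance $d=3c+1$ gives $d/c=[3;c]$.

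The Kravitz--Sah trick works for $e(P)=N$ because only $N$ is prescribed and the denominator of $N/b$ is free. For a prescribed ratio, the freedom has to be manufactured by splitting
$d/c=\alpha+\beta$ with $\alpha=K_1+\ell/c$, $\beta=K_2+m/c$ and $\ell+m\equiv h\pmod c$.

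\textbf{The missing additive lemma.} Realizing that split needs the following: for $x\in\Min(P)$ and $y\in\Min(Q)$, a poset $R$ with $\rho(R,z)=\rho(P,x)+\rho(Q,y)$ and $|R|=|P|+|Q|+O(1)$. This is what Chan and Pak's flip-flop construction provides. One way to get it is to insert one extra element between the two blocks in the construction of Lemma~\ref{lem:overlap}. That separates the two insertion ranges for $z$, removes the $-1$, and costs $|R|=|P|+|Q|$. Your Steps~2 and~3 lean on such an operation (``a ratio realized via two pieces'', ``combine the pieces'') without stating or proving it, yet it is the combinatorial heart of the theorem.

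\textbf{Where $d\ge 3c$ enters.} It comes from this lemma, not from ``integer room absorbing a correction''. Each block needs integer part $K_i\ge 1$: every $\rho$ is at least $1$, and Proposition~\ref{prop:cf-to-poset} requires $K\ge1$. When the chosen good $\ell$ exceeds $h$, the fractional parts sum to $(h+c)/c$. Then $K_1+K_2=k-1$, so one needs $k\ge 3$.

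\textbf{The number-theoretic input is also too weak as stated.} An averaging bound does not give $O(\log c\log\log c)$. The mean of $s(b/c)$ over $0\le b<c$ is of order $(\log c)^2$, so averaging yields only an $O(\log^2 d)$ error. What is needed is the tail form, Proposition~\ref{prop:ruk}: all but $o(c)$ residues $\ell$ satisfy $s(\ell/c)\le A\log c\log\log c$.

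It must be used in this density-one form, not as the existence of one good numerator. The split requires $\ell$ and $(h-\ell)\bmod c$ to be good simultaneously. Let $B_c$ be the set of bad residues. Since $|B_c\cup(h-B_c)|\le 2|B_c|<c$ for large $c$, such an $\ell$ exists. The finitely many small $c$ are absorbed into the constant.
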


They asked whether the hypothesis \(d\ge 3c\) can be weakened to
\(d\ge(1+\eps)c\), or dropped altogether, and conjectured separately that
the error term should be improvable to \(O(\log d)\).  The purpose of
this note is to settle the fixed-gap version of their question, and to
locate precisely the obstruction that prevents the present method from
going further.

\begin{theorem}
\label{thm:main}
For every fixed \(\eps>0\) there is a constant \(C_\eps>0\) such that,
for all positive integers \(c,d\) with \(d\ge(1+\eps)c\),
\[
        \nu(c,d)\le \frac{d}{c}+C_\eps\,\log(d+2)\,\log\log(d+3).
\]
Equivalently, \(\nu(c,d)\le d/c+O_\eps(\log d\log\log d)\) through the
range \(d\ge(1+\eps)c\).  Moreover, unconditionally: there is an absolute
constant \(C>0\) such that \(\nu(c,d)\le d/c+C\log(d+2)\log\log(d+3)\)
for all positive integers \(c,d\) with \(d\ge 2c\).
\end{theorem}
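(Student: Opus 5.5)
The plan is to combine the overlap construction from the abstract with the Chan--Pak continued-fraction construction. Write the target ratio as
\[
        \frac{d}{c}=\Bigl(\frac{d_1}{c}\Bigr)+\Bigl(\frac{d_2}{c}\Bigr)-1,
        \qquad\text{i.e.}\qquad d_1+d_2=d+c,
\]
and realize each summand by a poset in which the distinguished element is minimal. Then apply the overlap construction, so that \(\nu(c,d)\le |P|+|Q|-1\).

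The key point is that Chan and Pak's bound (Theorem~\ref{thm:chan-pak}) comes from posets of size roughly the sum of partial quotients of a suitable fraction attached to \(d/c\). It applies only for \(d\ge 3c\) because their construction needs an initial partial quotient of size at least \(3\); equivalently, \(\rho\ge 3\) forces a particular shape. I will first re-derive from their construction a slightly more general statement. If \(d'\ge 3c'\) (or some constant multiple), then there is a poset \(P\) with \(x\) minimal, \(\rho(P,x)=d'/c'\), and
\[
        |P|\le d'/c'+O\bigl(s(\cdot)\bigr),
\]
where \(s\) is the sum of partial quotients of an auxiliary fraction with denominator \(c'\) and numerator free to range over a residue-type family. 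Minimality of \(x\) should be visible directly in their chain/continued-fraction gadget, since the removed element is a bottom element of a chain. I would check this first, and if necessary modify the gadget by adding a single bottom element.

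With that in hand, for \(d\ge 2c\) I split \(d+c=d_1+d_2\) with \(d_1\ge 3c\) and \(d_2\) in a window of length \(\asymp d\) near a chosen value. Applying Rukavishnikova's tail bound (sums of partial quotients of \(a/b\) exceed \(C\log b\log\log b\) only for a small proportion of \(a\)), I choose \(d_1\) so that both fractions \(d_1/c\) and \(d_2/c\) have the required sums of partial quotients \(O(\log d\log\log d)\). Since the bad sets each have density \(o(1)\), a union bound inside an interval of length \(\gg c\) gives a good choice. The sizes then add to
\[
        \frac{d_1}{c}+\frac{d_2}{c}+O(\log d\log\log d)
        =\frac{d}{c}+1+O(\log d\log\log d),
\]
and the \(-1\) from the overlap cancels the \(+1\). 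For \(\eps\le 1\), I iterate: write \(d/c\) as a sum of \(k=O(1/\eps)\) ratios, each with \(d_i\ge(1+\eps')\)-type gap pushed into the \(\ge 3\) regime, minus \(k-1\). Alternatively, I use a single overlap in which one summand equals \(1+\text{small}\), realized by Chan--Pak after shifting. The constants then depend on \(\eps\) through the number of pieces and through the density estimate needed in windows of length \(\asymp\eps c\).

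The main obstacle is the density step. Rukavishnikova's bound must be applied simultaneously to two correlated fractions whose numerators sum to a fixed value, inside a window whose length is only \(\asymp \eps c\) rather than all residues mod \(c\). I would handle this by choosing the window length to be a positive proportion of \(c\) (possible exactly because of the fixed gap \(\eps\)), so that a tail bound with exceptional density \(o(1)\) beats the window's relative size. This is also where the argument must break down when \(h=d-c\) is much smaller than \(c\).
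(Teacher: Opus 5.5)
Your skeleton is the paper's: write \(d/c=\alpha+\beta-1\), realize \(\alpha\) and \(\beta\) by posets with a minimal distinguished element, overlap them, and choose the split by a density argument in a window of length \(\gg\eps c\). The gap is the building block you commit to, which cannot support that skeleton.

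You plan to derive the continued-fraction construction only for ratios \(d'/c'\ge 3\). Since the overlap gives \(\rho(R,z)=\rho(P,x)+\rho(Q,y)-1\), two such blocks only produce ratios \(\ge 5\). That is worse than Theorem~\ref{thm:chan-pak}.

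\begin{itemize}
\item In your \(d\ge 2c\) step, \(d_1\ge 3c\) forces \(d_2=d+c-d_1\le d-2c\). This is less than \(c\) whenever \(d<3c\), so \(\rho(Q,y)<1\), which is impossible. For larger \(d\) you never say how \(d_2/c\), typically below \(3\), is realized.
\item In the range \(1<d/c<2\), \(\alpha+\beta=d/c+1<3\) with \(\alpha,\beta\ge 1\) forces \(\alpha,\beta\in[1,2)\). So no block from the \(\ge 3\) regime can be used at all.
\item The iteration fallback fails twice over. First, \(k\) blocks of ratio \(\ge 3\) with \(k-1\) subtractions give at least \(2k+1\ge 3\). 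Second, the overlap cannot be iterated: \(z\) lies above every \(p\) with \(x\prec_P p\), so \(z\) is minimal in \(R\) only when \(x\) is isolated in \(P\), i.e.\ \(\rho(P,x)=|P|\) (Remark~\ref{rem:not-minimal}).
\item Your alternative, a single overlap with one summand \(1+\)small ``realized by Chan--Pak after shifting'', points the right way. But the shifting is exactly the unproved step, and when \(d/c<2\) the other summand must also be of the form \(1+\)small.
\end{itemize}

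The missing ingredient is that no re-derivation or shifting is needed. Chan and Pak's continued-fraction construction (Proposition~\ref{prop:cf-to-poset}, their Theorem 1.10) already holds for every integer part \(K\ge 1\). It gives \(x\) minimal with \(\rho(P,x)=K+m/b\) and \(|P|\le K+s(m/b)\) for any \(0\le m<b\). The threshold \(3\) in Theorem~\ref{thm:chan-pak} comes from their flip-flop sum \(\rho(P,x)+\rho(Q,y)\), which uses two auxiliary elements, not from the blocks.

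With this block, write \(d=kc+h\) with \(0\le h<c\), and let \(B_c\) be the set of residues \(m\) with \(s(m/c)>A\log c\log\log c\).
\begin{itemize}
\item For \(1<d/c<2\), take \(\alpha=1+\ell/c\) and \(\beta=1+(h-\ell)/c\), with \(\ell\in\{0,\dots,h\}\) such that neither \(\ell\) nor \(h-\ell\) lies in \(B_c\). This is possible once \(2|B_c|<\eps c\le h\).
\item For \(d/c\ge 2\), take \(\alpha=1+\ell/c\) and \(\beta=K+((h-\ell)\bmod c)/c\), with \(K=k\) or \(k-1\) according as \(\ell\le h\) or \(\ell>h\). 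Since \(k\ge 2\), \(K\ge 1\), so \(\ell\) ranges over all residues mod \(c\) and the constant is absolute.
\item The finitely many \(c\) below the density threshold are absorbed into the constant via \(\ell=0\).
\end{itemize}
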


The proof uses the same number-theoretic input as Chan and Pak, namely
a tail estimate of Rukavishnikova for the sum of partial quotients of a
rational number with fixed denominator \cite{Rukavishnikova2011}.  The
new point is combinatorial.  Chan and Pak combine two ratios by a
flip-flop construction that gives
\[
        \rho(R,z)=\rho(P,x)+\rho(Q,y),
\]
using two auxiliary elements.  Since the continued-fraction summands they
feed into it have size at least \(1\), that construction is suited to
ratios bounded away from \(1\) by a large constant; it forces the
threshold at \(3\).  We replace it by a one-element construction which
merges the two auxiliary elements into one and gives
\[
        \rho(R,z)=\rho(P,x)+\rho(Q,y)-1.
\]
The subtracted \(1\) is exactly what is needed to reach the range
\(1<d/c<2\).  This is the content of Lemma~\ref{lem:overlap} in
Section~\ref{sec:overlap}, and the proof of Theorem~\ref{thm:main}
occupies Section~\ref{sec:proof}.

Our second result explains why the fixed-gap hypothesis cannot simply be
removed from this line of argument.  It rests on an elementary lower
bound for the sum of partial quotients of a fraction with small
numerator.  Write \(s(m/b)\) for the sum of the partial quotients in the
simple continued fraction of \(m/b\).

\begin{proposition}[Optimality of the hypothesis for this method]
\label{prop:sharp-intro}
Let \(c,d\) be coprime with \(1<d/c<2\), and put \(h=d-c\).  In this
range the construction of Section~\ref{sec:proof} realizes \(d/c\)
through a two-block overlap, and the size bound it certifies for the
resulting poset \(R\) is
\[
        |R|\ \le\ 1+s(\ell/c)+s\bigl((h-\ell)/c\bigr)
\]
for a choice of \(\ell\in\{0,1,\dots,h\}\), while for every such \(\ell\),
\[
        s(\ell/c)+s\bigl((h-\ell)/c\bigr)\ \ge\ \Bigl\lfloor \frac{c}{h}\Bigr\rfloor.
\]
Consequently the construction certifies the bound
\(\nu(c,d)\le d/c+O(\log d\log\log d)\) only when
\(h=\Omega\bigl(c/(\log c\log\log c)\bigr)\).  If \(h/c\to 0\) faster than
this inverse-polylogarithmic rate, then no choice of parameters lets the
construction certify the bound, and the hypothesis \(d\ge(1+\eps)c\) cannot
be removed by this construction alone.
\end{proposition}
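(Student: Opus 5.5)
The plan has three steps: make the size bound explicit from the construction, prove the partial-quotient lower bound by looking only at the first partial quotient, and then compare with the target error term.

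\textbf{Step 1: the certified size bound.} In the range \(1<d/c<2\) the construction of Section~\ref{sec:proof} realizes \(d/c\) by writing
\[
\frac{d}{c}=1+\frac{h}{c}=\Bigl(1+\frac{\ell}{c}\Bigr)+\Bigl(1+\frac{h-\ell}{c}\Bigr)-1,\qquad 0\le \ell\le h .
\]
Each summand \(1+m/c\) is realized by a Chan--Pak continued-fraction poset whose size is bounded by \(s(m/c)\) plus the pendant minimal element that carries the ratio. Lemma~\ref{lem:overlap} then glues the two blocks at their minimal elements, which costs \(|P|+|Q|-1\). Adding up these contributions gives \(|R|\le 1+s(\ell/c)+s((h-\ell)/c)\). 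This step only unpacks the construction; the one thing to check is the convention for the degenerate case \(m=0\). There the block is a single element and \(s(0/c)=0\), so the bound still holds.

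\textbf{Step 2: the lower bound.} Since \(h\ge 1\), the numbers \(\ell\) and \(h-\ell\) cannot both be \(0\). Choose \(m\in\{\ell,h-\ell\}\) with \(1\le m\le h\). Write \(m/c=m'/c'\) in lowest terms. Because \(m<c\), the expansion is \(m'/c'=[0;a_1,a_2,\dots]\) with
\[
a_1=\lfloor c'/m'\rfloor=\lfloor c/m\rfloor\ge\lfloor c/h\rfloor .
\]
All partial quotients are nonnegative, so \(s(m/c)\ge a_1\ge\lfloor c/h\rfloor\). The other summand is at least \(0\), and this gives \(s(\ell/c)+s((h-\ell)/c)\ge\lfloor c/h\rfloor\) for every \(\ell\).

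\textbf{Step 3: the consequence.} Since \(1<d/c<2\), certifying \(\nu(c,d)\le d/c+C\log d\log\log d\) through this construction requires
\[
\lfloor c/h\rfloor\le 1+C\log d\log\log d .
\]
Because \(c<d<2c\), we have \(\log d\asymp\log c\), so this forces \(h\ge c/(C'\log c\log\log c)\). If instead \(h/c\) tends to \(0\) faster, then \(\lfloor c/h\rfloor\) eventually exceeds any bound of the form \(C\log c\log\log c\). Step 2 is uniform in \(\ell\), so no choice of parameters can rescue the bound.

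The main obstacle is Step 1. One has to make sure the construction in Section~\ref{sec:proof} in this range really is, and can only be, a two-block overlap with blocks \(1+\ell/c\) and \(1+(h-\ell)/c\). One also has to confirm that the Chan--Pak block for \(1+m/c\) costs \(s(m/c)\) plus at most one element, under the paper's exact bookkeeping, in particular at \(m=0\). I would fix this by reading off the block sizes directly from how the proof of Theorem~\ref{thm:main} invokes the Chan--Pak construction. The number-theoretic part is elementary once that is settled.
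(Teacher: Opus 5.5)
Your proposal is correct and follows essentially the same route as the paper: you read off \(|P|\le 1+s(\ell/c)\) and \(|Q|\le 1+s((h-\ell)/c)\) from the first case of Section~\ref{sec:proof}, then bound the sum below by the first partial quotient \(\lfloor c/m\rfloor\) of a nonzero numerator \(m\le h\). The paper takes \(b=\max(\ell,h-\ell)\), which comes to the same thing. The point you flag as the main obstacle is settled in the paper in one line. Each block ratio is at least \(1\) and \(\alpha+\beta=d/c+1<3\), so neither ratio can be \(\ge 2\), which forces both blocks to have the form \(1+m/c\).
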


Proposition~\ref{prop:sharp-intro} is proved in Section~\ref{sec:sharp}.
The barrier it exposes is a short-interval question for sums of partial
quotients: when \(h=o(c)\), one must find \(\ell\) in an interval of
length \(h\) for which both \(\ell/c\) and \((h-\ell)/c\) have small
weight, whereas Rukavishnikova's theorem gives only a global,
density-one statement modulo \(c\).  We isolate this question, and the
separate matter of the conjectural \(O(\log d)\) error term, in
Section~\ref{sec:remarks}.  Section~\ref{sec:verify} records the fact
that the deductive steps of Sections~\ref{sec:proof} and
\ref{sec:sharp}, together with the counting identity behind
Lemma~\ref{lem:overlap} on small posets, have been formally verified.

\section{Preliminaries}
\label{sec:prelim}

We recall only what the proof uses; a standard reference for linear
extensions is \cite[Ch.~3]{Stanley2012}.  For a poset \(P\), let
\(\cL(P)\) denote the set of linear extensions of \(P\), so that
\(e(P)=|\cL(P)|\), with the convention \(e(\varnothing)=1\).  We regard a
linear extension as a listing of the elements of \(P\) from smallest to
largest position, consistent with the order.  For \(x\in P\), \(P-x\) is
the induced subposet on the remaining elements, and \(P^{*}\) is the dual
poset.  Reversing a listing is a bijection between \(\cL(P)\) and
\(\cL(P^{*})\), so
\begin{equation}
\label{eq:dual}
        e(P^{*})=e(P),
        \qquad e(P^{*}-x)=e\bigl((P-x)^{*}\bigr)=e(P-x).
\end{equation}
If \(P\) and \(Q\) are disjoint posets, their ordinal sum \(P\oplus Q\)
is the poset on \(P\cup Q\) that keeps the relations of \(P\) and of
\(Q\) and puts every element of \(P\) below every element of \(Q\); then
\begin{equation}
\label{eq:ordinal}
        e(P\oplus Q)=e(P)\,e(Q),
\end{equation}
since a listing of \(P\oplus Q\) is a listing of \(P\) followed by a
listing of \(Q\).

We use one standard counting fact.  For any \(x\in P\), deleting \(x\)
from a listing defines a surjection \(\cL(P)\to\cL(P-x)\); the listings of
\(P\) lying above a fixed \(L\in\cL(P-x)\) correspond to the positions
into which \(x\) may be inserted, namely the positions after every
element below \(x\) and before every element above \(x\).  If \(x\) is a
minimal element of \(P\), it has nothing below it, so these are the
initial positions, up to the first element above \(x\).  Summing the
number of such positions over all \(L\in\cL(P-x)\) recovers \(e(P)\).

For a rational number \(m/b\) with \(0\le m<b\), let \(s(m/b)\) be the
sum of the partial quotients in its simple continued fraction, with
\(s(0)=0\).  If \(K\ge 0\) is an integer, then
\(K+m/b=[K;a_1,\dots,a_n]\) whenever \(m/b=[0;a_1,\dots,a_n]\), so
\begin{equation}
\label{eq:sadd}
        s(K+m/b)=K+s(m/b).
\end{equation}
The one piece of number theory we need is the following density-one
consequence of Rukavishnikova's fixed-denominator estimate, in the form
quoted by Chan and Pak.

\begin{proposition}[Rukavishnikova {\cite{Rukavishnikova2011}}; see {\cite[Theorem 1.9]{ChanPak2024}}]
\label{prop:ruk}
There is a universal constant \(A>0\) and a function \(\delta_b\to 0\)
such that, for every integer \(b\ge 2\),
\[
        \#\Bigl\{\,0\le m<b : s(m/b)>A\log b\log\log b\,\Bigr\}
        \ \le\ \delta_b\,b.
\]
\end{proposition}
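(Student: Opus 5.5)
The plan is to deduce the statement from Rukavishnikova's distributional theorem for \(s(m/b)\) over reduced residues, and then to pass from reduced residues to all residues by a divisor decomposition. Rukavishnikova \cite{Rukavishnikova2011} shows that, for \(m\) uniform among the \(\varphi(b')\) residues coprime to \(b'\), the quantity \(s(m/b')\) concentrates at scale \(\tfrac{12}{\pi^2}\log b'\log\log b'\), with a tail estimate for the upper deviation. I will not reprove her estimate. The only consequence I need is this: for a suitable universal \(A\), say any \(A>12/\pi^2\), the proportion
\[
        \eta_{b'}:=\frac{1}{\varphi(b')}\#\Bigl\{\,0\le m<b',\ (m,b')=1 : s(m/b')>A\log b'\log\log b'\,\Bigr\}
\]
tends to \(0\) as \(b'\to\infty\).

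Next I reduce to the coprime case. Each \(m\) with \(0\le m<b\) can be written uniquely as \(m/b=m'/b'\) with \(b'\mid b\), \(0\le m'<b'\), and \((m',b')=1\). The value \(s(m/b)\) depends only on the rational number \(m/b\), so \(s(m/b)=s(m'/b')\). Since \(b'\le b\) and \(t\mapsto\log t\log\log t\) is nondecreasing for \(t\ge 3\), the condition \(s(m/b)>A\log b\log\log b\) implies \(s(m'/b')>A\log b'\log\log b'\) whenever \(b'\ge 3\). Summing over divisors gives
\[
        \#\{\,0\le m<b: s(m/b)>A\log b\log\log b\,\}
        \ \le\ \sum_{b'\mid b,\ b'<M}\#\{\cdots\}\ +\ \sum_{b'\mid b,\ b'\ge M}\eta_{b'}\,\varphi(b')
\]
for any cutoff \(M\ge 3\).

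Now I bound the two sums. For the large divisors, \(\sum_{b'\mid b}\varphi(b')=b\), so the second sum is at most \(\bigl(\sup_{b'\ge M}\eta_{b'}\bigr)\,b\). For the small divisors, \(s(m'/b')\le b'<M\), because the partial quotients of a fraction with denominator \(b'\) sum to at most \(b'\). Hence every such \(m\) is excluded from the count as soon as \(A\log b\log\log b\ge M\). For smaller \(b\), the first sum is trivially at most \(\sum_{b'<M}b'\le M^2\).

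Finally, let \(M=M(b)\to\infty\) slowly, for instance \(M=\min\{\log b,\sqrt b\}\), and set
\[
        \delta_b:=\sup_{b'\ge M(b)}\eta_{b'}+M(b)^2/b .
\]
This tends to \(0\) and gives the stated bound. For the finitely many small \(b\) one may simply take \(\delta_b=1\). The step carrying all the difficulty is the first one: the concentration, or tail, estimate for \(s(m/b')\) over reduced residues with a fixed denominator. That is genuinely Rukavishnikova's theorem, and I would cite it in the form given in \cite[Theorem 1.9]{ChanPak2024} rather than reprove it. The divisor bookkeeping is routine once the monotonicity of \(\log t\log\log t\) and the trivial bound \(s\le b'\) are in hand.
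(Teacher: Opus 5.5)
Your argument is correct, and it supplies something the paper does not. The paper gives no proof of Proposition~\ref{prop:ruk}. It imports the estimate from Rukavishnikova in the form quoted by Chan and Pak, and simply asserts that this form counts all residues modulo \(b\), with no coprimality restriction.

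You instead start from the reduced-residue law of large numbers, which averages over the \(\varphi(b')\) coprime numerators, and derive the all-residue version from it. Your four steps are:
\begin{enumerate}[label=\textup{(\arabic*)}]
    \item write \(m/b=m'/b'\) in lowest terms;
    \item use \(\sum_{b'\mid b}\varphi(b')=b\) to handle the large divisors;
    \item use monotonicity of \(\log t\log\log t\) on \(t\ge 3\) to transfer the threshold from \(b\) to \(b'\);
    \item use the bound \(s(m'/b')\le b'\) to dispose of the small divisors.
\end{enumerate}
All of these check out. The bound in (4) holds because the continuant \(K(a_1,\dots,a_n)\) dominates \(a_1+\dots+a_n\), by induction from \(K_n\ge a_nK_{n-1}+1\ge K_{n-1}+a_n\).

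One small simplification: your cutoff \(M=\min\{\log b,\sqrt b\}\) is just \(\log b\). With that cutoff the small-divisor contribution is actually zero once \(A\log\log b\ge 1\), so the \(M^2/b\) term in \(\delta_b\) is only a safety margin.

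Your route makes the paper's remark about counting all residues a proved consequence rather than a reading of the citation. It also makes the statement robust to whether the literature states the theorem only over reduced residues. It further shows that the exact limiting constant (\(12/\pi^2\) or otherwise) is irrelevant, since only the existence of some universal \(A\) with \(\eta_{b'}\to 0\) is used. The paper's approach is shorter but rests entirely on the cited statement already being in the all-residue form.
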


The estimate above counts all residues modulo \(b\), with no coprimality
restriction, which is what we shall use.  Sharper fixed-denominator
distribution results are available, for instance those of Aistleitner,
Borda and Hauke \cite{AistleitnerBordaHauke2024}, but the density-one
statement suffices here.

Finally, we record the simple-continued-fraction case of Chan and Pak's
poset construction, which turns a prescribed ratio into a poset of
controlled size with the distinguished element minimal.

\begin{proposition}[Chan and Pak {\cite[Theorem 1.10]{ChanPak2024}}]
\label{prop:cf-to-poset}
Let \(K\ge 1\), \(b\ge 1\), and \(0\le m<b\).  There is a finite poset
\(P\) and a minimal element \(x\in\Min(P)\) with
\[
        \rho(P,x)=K+\frac{m}{b}
        \qquad\text{and}\qquad
        |P|\le K+s(m/b).
\]
\end{proposition}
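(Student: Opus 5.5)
The plan is to build the poset one element at a time, following the continued fraction of \(m/b\) from the innermost quotient outward. The count uses the insertion description from the preliminaries: if \(x\) is minimal, then \(\rho(P,x)\) is the average number of initial positions into which \(x\) can be inserted. Two one-element operations should suffice. Both start from a poset \(P\) with a minimal element \(x\) and \(\rho(P,x)=r\).

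\emph{Operation A} (\(r\mapsto r+1\)). Adjoin a new element \(t\), placed below every element of \(P-x\) and incomparable to \(x\), and let \(R=P\cup\{t\}\). Then \(x\) is still minimal in \(R\). Also \(R-x=\{t\}\oplus(P-x)\), so \(e(R-x)=e(P-x)\) by \eqref{eq:ordinal}. In a listing of \(R\), the element \(t\) comes first among the elements of \(P-x\). So \(x\) either precedes \(t\), or follows \(t\) in one of the positions that were already admissible in \(P\). I expect this to give \(e(R)=e(P)+e(P-x)\), hence \(\rho(R,x)=r+1\). To be checked carefully: the positions "after \(t\)" for \(x\) must correspond exactly to the admissible positions in \(P\), with \(t\) occupying the old front slot.

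\emph{Operation B} (\(r\mapsto 1+1/r\)). Take the same \(R\), but make \(t\) the distinguished element. It is minimal, since nothing lies below it. Then \(R-t=P\), so
\[
\rho(R,t)=\frac{e(R)}{e(P)}=\frac{e(P)+e(P-x)}{e(P)}=1+\frac1r .
\]
Each operation adds exactly one element and leaves a minimal distinguished element.

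Now write \(m/b=[0;a_1,\dots,a_n]\), so that \(s(m/b)=a_1+\dots+a_n\). Start from the one-element poset, which has ratio \(1\) and size \(1\). Apply A \(a_n-1\) times to get value \(a_n\) at size \(a_n\). Then, by induction down the list, apply B followed by \(a_{i}-1\) copies of A for \(i=n-1,\dots,1\). This realizes \([a_i;a_{i+1},\dots,a_n]\) at size \(a_i+\dots+a_n\). After reaching \([a_1;\dots,a_n]=b/m\), one more B gives \(1+m/b\) at size \(1+s(m/b)\). Finally \(K-1\) applications of A give \(K+m/b\) at size \(K+s(m/b)\). If \(m=0\), we simply apply A \(K-1\) times to the one-element poset, getting size \(K\). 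The bound holds in all cases, with equality.

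The only real obstacle is the counting identity \(e(R)=e(P)+e(P-x)\) in Operation A. My approach is to partition the listings of \(R\) according to whether \(x\) precedes \(t\). In the first case the listing is \(x\) followed by \(t\) followed by a listing of \(P-x\), giving \(e(P-x)\) listings. In the second case, deleting \(t\) should give a bijection onto \(\cL(P)\). This works because \(t\) lies below all of \(P-x\), so in these listings it must occupy the first slot apart from \(x\).
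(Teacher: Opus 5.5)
Your proof is correct. The paper gives no argument of its own for this statement; it quotes it from Chan and Pak. Your proposal therefore supplies a self-contained derivation where the paper relies on the citation.

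The counting in Operation A is exact. Listings of \(R\) with \(x\) before \(t\) are precisely \(x\,t\,L\) with \(L\in\cL(P-x)\). Listings with \(t\) before \(x\) are precisely \(t\,L'\) with \(L'\in\cL(P)\). Hence \(e(R)=e(P)+e(P-x)\). Both \(\rho(R,x)=r+1\) and \(\rho(R,t)=1+1/r\) then follow at once, with the distinguished element minimal in each case.

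The Euclidean induction is also correct. In particular, \([0;a_1,\dots,a_n]=1/[a_1;\dots,a_n]\) justifies the final B-step, and \(s\) does not depend on which of the two continued-fraction expansions you use. The construction in fact gives equality \(|P|=K+s(m/b)\), slightly more than the stated bound.

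You leave one point implicit that should be stated. Adjoining \(t\) below every element of \(P\setminus\{x\}\) while keeping \(t\) incomparable to \(x\) defines a partial order only because \(P\setminus\{x\}\) is an up-set of \(P\). That is exactly the minimality of \(x\); otherwise \(t\prec p\prec x\) would force \(t\prec x\). The same fact, together with nothing lying below \(t\), guarantees that no new relations appear among elements of \(P\). Your bijection in the case ``\(t\) before \(x\)'' needs this.
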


Neither in \(s(m/b)\) nor in Proposition~\ref{prop:cf-to-poset} is the
fraction \(m/b\) required to be in lowest terms: \(s\) depends only on
the rational value \(m/b\), being computed from the continued fraction of
the reduced form, and the proposition is stated for arbitrary
\(0\le m<b\).  We shall apply both to fractions \(\ell/c\) whose
numerator and denominator need not be coprime.

\section{The overlap-sum construction}
\label{sec:overlap}

We now prove the combinatorial heart of the paper.  It may be read as an
overlap analogue of the flip-flop construction of Chan and Pak
\cite[\S3]{ChanPak2024}: their construction realizes
\(\rho(P,x)+\rho(Q,y)\) using two auxiliary elements, whereas ours merges
those elements into a single one and pays for the merge by subtracting
\(1\).

\begin{lemma}[Overlap-sum lemma]
\label{lem:overlap}
Let \(P=(X,\prec_P)\) and \(Q=(Y,\prec_Q)\) be finite posets, and suppose
\(x\in\Min(P)\) and \(y\in\Min(Q)\).  Then there is a finite poset \(R\)
and an element \(z\in R\) with
\[
        |R|=|P|+|Q|-1
        \qquad\text{and}\qquad
        \rho(R,z)=\rho(P,x)+\rho(Q,y)-1.
\]
\end{lemma}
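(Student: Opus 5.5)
Write \(P'=P-x\) and \(Q'=Q-y\). The plan is to build \(R\) on \(P'\cup Q'\cup\{z\}\) so that \(R-z\) is an ordinal sum, which makes \(e(R-z)\) a product by \eqref{eq:ordinal}. The element \(z\) is then attached so that its insertion slots split into a contribution from \(P\) and a contribution from \(Q\), overlapping in exactly one slot.

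\textbf{Step 1: express each ratio as an average slot count.} By the counting fact in Section~\ref{sec:prelim}, since \(x\) is minimal,
\[
e(P)=\sum_{L\in\cL(P')} f_P(L),
\]
where \(f_P(L)\) is the number of initial positions of \(L\) available to \(x\). Thus \(f_P(L)=1+g_P(L)\), with \(g_P(L)\) the number of elements of \(L\) preceding the first element of \(U:=\{u\in P': u\succ_P x\}\) (or all of \(L\) if \(U=\varnothing\)). Hence \(\rho(P,x)\) is the average of \(f_P\) over \(\cL(P')\), and likewise \(\rho(Q,y)\) is the average of \(f_Q=1+g_Q\) over \(\cL(Q')\), with \(W:=\{w\in Q':w\succ_Q y\}\).

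\textbf{Step 2: the construction.} Take the dual \(Q'^{*}\), in which \(W\) becomes a down-set \(D\), and form \(R-z:=Q'^{*}\oplus P'\), so that \(Q'^*\) lies entirely below \(P'\). Add a new element \(z\) with \(z\succ d\) for all \(d\in D\) and \(z\prec u\) for all \(u\in U\), and take the transitive closure.

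We check that this is a poset and that no further relations involving \(z\) are forced:
\begin{itemize}
\item \(D\) is a down-set of \(Q'^*\) and \(U\) is an up-set of \(P'\).
\item Every element of \(D\) already lies below every element of \(U\) in the ordinal sum, so adding \(z\) between them creates no cycle.
\item The elements below \(z\) are exactly \(D\) and the elements above \(z\) are exactly \(U\).
\end{itemize}
Clearly \(|R|=(|Q|-1)+(|P|-1)+1=|P|+|Q|-1\).

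\textbf{Step 3: count slots.} A listing of \(R-z\) is a listing \(M\) of \(Q'^*\) followed by a listing \(L\) of \(P'\). The element \(z\) may be inserted anywhere after the last element of \(D\) and before the first element of \(U\).

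\begin{itemize}
\item Reversing \(M\) gives a listing of \(Q'\), and the elements of \(M\) after its last \(D\)-element are the elements of that listing before its first \(W\)-element. There are \(g_Q\) of them.
\item The elements of \(L\) before its first \(U\)-element number \(g_P(L)\).
\end{itemize}
The slot count is therefore \(g_Q+g_P+1=f_P+f_Q-1\).

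Summing over the product \(\cL(Q'^*)\times\cL(P')\), and using \eqref{eq:dual} to identify \(\cL(Q'^*)\) with \(\cL(Q')\), gives
\[
e(R)=e(Q')\,e(P)+e(P')\,e(Q)-e(P')\,e(Q').
\]
Dividing by \(e(R-z)=e(P')\,e(Q')\) yields \(\rho(R,z)=\rho(P,x)+\rho(Q,y)-1\).

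\textbf{Main obstacle.} The only delicate step is verifying that the closure adds no unexpected comparabilities with \(z\), and handling the degenerate cases \(U=\varnothing\) or \(D=\varnothing\) where a boundary is missing. In those cases the slot count should still equal \(g_P+g_Q+1\) once the convention "before the first \(U\)-element" is read as "anywhere" when \(U\) is empty, and similarly for \(D\). I will check these cases explicitly.
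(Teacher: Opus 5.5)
Your proposal is correct and is essentially the paper's proof with the roles of \(P\) and \(Q\) interchanged. The paper takes \(R-z=(P^{*}-x)\oplus(Q-y)\), places \(z\) above the dualized up-set of \(x\) and below the up-set of \(y\), and counts the valid gaps as terminal gaps of the lower block plus initial gaps of the upper block, meeting in one gap; this is exactly your count \(g_Q+g_P+1=f_P+f_Q-1\), and your reversal of the lower listing together with your empty-set conventions corresponds to the paper's use of \eqref{eq:dual} and its \(j=0\), \(i=|B|+1\) conventions.
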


\begin{proof}
We may assume \(X\cap Y=\varnothing\).  Put \(A:=P^{*}-x\) on
\(X\setminus\{x\}\) and \(B:=Q-y\) on \(Y\setminus\{y\}\), and let \(R\)
be the poset on
\[
        Z:=(X\setminus\{x\})\sqcup(Y\setminus\{y\})\sqcup\{z\}
\]
generated, under transitive closure, by
\begin{enumerate}[label=\textup{(\roman*)}]
    \item the order of \(A\) on \(X\setminus\{x\}\) and the order of
    \(B\) on \(Y\setminus\{y\}\);
    \item \(u\prec_R w\) for all \(u\in X\setminus\{x\}\) and
    \(w\in Y\setminus\{y\}\);
    \item \(p\prec_R z\) whenever \(x\prec_P p\);
    \item \(z\prec_R q\) whenever \(y\prec_Q q\).
\end{enumerate}
Write \(U:=\{p\in X\setminus\{x\}:x\prec_P p\}\) and
\(V:=\{q\in Y\setminus\{y\}:y\prec_Q q\}\).  By (ii) every element of
\(U\) lies below every element of \(V\) in \(R\), so no element is forced
both below \(z\) (through (iii)) and above \(z\) (through (iv)); hence
\(\prec_R\) is a partial order.

Deleting \(z\) leaves the ordinal sum \(A\oplus B\).  Using
\eqref{eq:ordinal}, \eqref{eq:dual}, and \(e(B)=e(Q-y)\),
\begin{equation}
\label{eq:denom}
        e(R-z)=e(A)\,e(B)=e(P^{*}-x)\,e(Q-y)=e(P-x)\,e(Q-y).
\end{equation}

We now count \(e(R)\).  Each linear extension of \(R-z\) is a
concatenation \(L=L_A L_B\), where \(L_A\in\cL(A)\) fills positions
\(1,\dots,|A|\) and \(L_B\in\cL(B)\) fills positions
\(|A|+1,\dots,|A|+|B|\); here \(|A|=|P|-1\) and \(|B|=|Q|-1\).  To extend
\(L\) to \(R\) we insert \(z\) into one of the \(|A|+|B|+1\) gaps, numbered
\(0,\dots,|A|+|B|\), where gap \(t\) is the position immediately after the
\(t\)-th letter.  By (iii), \(z\) must follow every element of \(U\); by
(iv), \(z\) must precede every element of \(V\).  Let \(j\) be the
position of the last element of \(U\) in \(L_A\), with \(j=0\) if
\(U=\varnothing\).  Every element below \(U\) in \(A\) precedes an element
of \(U\), so the down-closure of \(U\) occupies positions among
\(1,\dots,j\), and \(z\) may occupy any gap \(t\ge j\) as far as the
\(A\)-side is concerned.  Let \(i\) be the position of the first element of
\(V\) in \(L_B\), with \(i=|B|+1\) if \(V=\varnothing\); the first element
of \(V\) then sits at global position \(|A|+i\), and \(z\) may occupy any
gap \(t\le|A|+i-1\).  Hence the valid gaps for \(z\) are exactly
\[
        \{\,j,\,j+1,\,\dots,\,|A|+i-1\,\}.
\]

Split this range at the block boundary \(|A|\).  On the \(A\)-side the
valid gaps are \(\{j,\dots,|A|\}\), the terminal gaps of the \(A\)-block.
Since \(x\) is minimal in \(P\), it is maximal in \(P^{*}\), and its only
relations in \(P^{*}\) are \(p\prec_{P^{*}}x\) for \(p\in U\); so the gaps
into which \(x\) may be inserted in \(L_A\) to give a linear extension of
\(P^{*}\) are precisely \(\{j,\dots,|A|\}\), of which there are
\(a(L_A):=|A|-j+1\).  Summing over \(L_A\in\cL(A)=\cL(P^{*}-x)\), the
insertion count from Section~\ref{sec:prelim} gives
\begin{equation}
\label{eq:asum}
        \sum_{L_A} a(L_A)=e(P^{*})=e(P).
\end{equation}
On the \(B\)-side the valid gaps are \(\{|A|,\dots,|A|+i-1\}\), the
initial gaps of the \(B\)-block; since \(y\) is minimal in \(Q\), the gaps
into which \(y\) may be inserted in \(L_B\) to give a linear extension of
\(Q\) are precisely these, of which there are \(b(L_B):=i\), and
\begin{equation}
\label{eq:bsum}
        \sum_{L_B} b(L_B)=e(Q).
\end{equation}
The two ranges \(\{j,\dots,|A|\}\) and \(\{|A|,\dots,|A|+i-1\}\) meet in
the single gap \(|A|\).  Therefore the number of valid gaps for \(z\)
equals
\[
        (|A|-j+1)+i-1=a(L_A)+b(L_B)-1.
\]
Summing over all \(L=L_A L_B\) and using \eqref{eq:asum},
\eqref{eq:bsum}, and \eqref{eq:denom},
\[
        e(R)=\sum_{L_A,\,L_B}\bigl(a(L_A)+b(L_B)-1\bigr)
        =e(P)\,e(Q-y)+e(P-x)\,e(Q)-e(P-x)\,e(Q-y).
\]
Dividing by \eqref{eq:denom},
\[
        \rho(R,z)=\frac{e(R)}{e(R-z)}
        =\frac{e(P)}{e(P-x)}+\frac{e(Q)}{e(Q-y)}-1
        =\rho(P,x)+\rho(Q,y)-1.
\]
Finally \(|R|=(|X|-1)+(|Y|-1)+1=|P|+|Q|-1\).
\end{proof}

\begin{remark}
\label{rem:not-minimal}
The element \(z\) need not be minimal in \(R\), so the lemma cannot be
iterated.  This is harmless for \(\nu(c,d)\), where the distinguished
element is arbitrary, and the lemma is applied only once.  The
minimality of \(x\) and \(y\) is used only to make the valid insertion
gaps in the two halves a terminal interval and an initial interval
respectively, so that they overlap in exactly one gap; that single
overlap is the source of the \(-1\).
\end{remark}

\section{Proof of the fixed-gap theorem}
\label{sec:proof}

Since \(\nu(c,d)\) depends only on \(d/c\), we may divide \(c\) and \(d\)
by their greatest common divisor, and so assume \(\gcd(c,d)=1\)
throughout.  Write
\begin{equation}
\label{eq:euclid}
        d=kc+h,\qquad k=\Bigl\lfloor \frac{d}{c}\Bigr\rfloor,\qquad 0\le h<c.
\end{equation}
When \(c\ge 2\), coprimality forces \(h\ge 1\); the case \(c=1\) is
treated with the small denominators below.  Let \(A\) be the constant of
Proposition~\ref{prop:ruk}, and for an integer \(b\ge 2\) put
\[
        B_b:=\Bigl\{\,0\le m<b : s(m/b)>A\log b\log\log b\,\Bigr\},
        \qquad |B_b|\le\delta_b\,b=o(b).
\]

\subsection*{The range \(1<d/c<2\)}
Here \(k=1\), so \(d=c+h\), and the hypothesis \(d\ge(1+\eps)c\) gives
\(h=d-c\ge\eps c\).  In this range \(c\ge 2\), since for \(c=1\) the
ratio \(d/c\) is an integer; hence coprimality gives \(h\ge 1\).  Fix
\(\eta>0\) with \(2\eta<\eps\).  There is a threshold \(c_0(\eta)\) such
that \(|B_c|\le\eta c\) for all \(c\ge c_0(\eta)\); this threshold,
through the small-denominator step below, is the only source of the
dependence of \(C_\eps\) on \(\eps\).  Assume \(c\ge c_0(\eta)\), and
consider the
interval \(I:=\{0,1,\dots,h\}\).  At most \(|B_c|\) of the \(\ell\in I\)
have \(\ell\in B_c\), and, since \(\ell\mapsto h-\ell\) is an injection of
\(I\) into \(\{0,\dots,c-1\}\), at most \(|B_c|\) of them have
\(h-\ell\in B_c\).  Hence at least
\[
        |I|-2|B_c|\ \ge\ (h+1)-2\eta c\ >\ 0
\]
values of \(\ell\) avoid both, using \(2\eta c<\eps c\le h\).  Choose such
an \(\ell\).  Then
\begin{equation}
\label{eq:case1-good}
        s(\ell/c)\le A\log c\log\log c
        \qquad\text{and}\qquad
        s\bigl((h-\ell)/c\bigr)\le A\log c\log\log c.
\end{equation}
Put
\[
        \alpha=1+\frac{\ell}{c},\qquad \beta=1+\frac{h-\ell}{c},
\]
so that \(\alpha+\beta-1=1+h/c=d/c\), and both \(\alpha\) and \(\beta\)
have the form \(1+m/c\) with \(0\le m<c\).  By
Proposition~\ref{prop:cf-to-poset} there are posets \(P,Q\) with minimal
elements \(x\in\Min(P)\), \(y\in\Min(Q)\) such that
\(\rho(P,x)=\alpha\), \(\rho(Q,y)=\beta\), and
\[
        |P|\le 1+s(\ell/c),\qquad |Q|\le 1+s\bigl((h-\ell)/c\bigr).
\]
Lemma~\ref{lem:overlap} yields \(R\) and \(z\) with \(\rho(R,z)=d/c\) and,
by \eqref{eq:case1-good},
\[
        |R|=|P|+|Q|-1\le 1+s(\ell/c)+s\bigl((h-\ell)/c\bigr)
        \le 1+2A\log c\log\log c.
\]
Since \(d<2c\) here, this is \(\nu(c,d)\le d/c+O_\eps(\log d\log\log d)\).

\subsection*{The range \(d/c\ge 2\)}
Now \(k\ge 2\), and no gap hypothesis is needed.  For all sufficiently
large \(c\) the set \(B_c\cup(h-B_c)\) has fewer than \(c\) elements,
where \(h-B_c:=\{(h-r)\bmod c:r\in B_c\}\), because
\(|B_c\cup(h-B_c)|\le 2|B_c|=o(c)\).  Choose
\(\ell\in\{0,1,\dots,c-1\}\) outside this union, and put
\[
        m:=(h-\ell)\bmod c,\qquad 0\le m<c.
\]
Then \(\ell\notin B_c\) and \(m\notin B_c\), so
\begin{equation}
\label{eq:case2-good}
        s(\ell/c)\le A\log c\log\log c
        \qquad\text{and}\qquad
        s(m/c)\le A\log c\log\log c.
\end{equation}
Set \(K:=k\) if \(\ell\le h\) and \(K:=k-1\) if \(\ell>h\); since
\(k\ge 2\), in both cases \(K\ge 1\).  Define
\[
        \alpha=1+\frac{\ell}{c},\qquad \beta=K+\frac{m}{c}.
\]
If \(\ell\le h\), then \(m=h-\ell\) and
\(\alpha+\beta-1=k+h/c=d/c\).  If \(\ell>h\), then \(m=h-\ell+c\) and
\[
        \alpha+\beta-1
        =\Bigl(1+\frac{\ell}{c}\Bigr)+\Bigl(k-1+\frac{h-\ell+c}{c}\Bigr)-1
        =k+\frac{h}{c}=\frac{d}{c}.
\]
By Proposition~\ref{prop:cf-to-poset} choose \(P,Q\) with minimal elements
realizing \(\alpha,\beta\) and
\[
        |P|\le 1+s(\ell/c),\qquad |Q|\le K+s(m/c).
\]
Lemma~\ref{lem:overlap} gives \(R,z\) with \(\rho(R,z)=d/c\) and, by
\eqref{eq:case2-good},
\[
        |R|=|P|+|Q|-1\le K+s(\ell/c)+s(m/c)\le K+2A\log c\log\log c.
\]
Since \(K\le\lfloor d/c\rfloor\le d/c\) and \(c\le d\), this is
\(\nu(c,d)\le d/c+O(\log d\log\log d)\) with an absolute constant.

\subsection*{Small denominators}
It remains to treat the denominators excluded by the two thresholds
above, namely \(c<c_0(\eta)\) in the first range and the finitely many
\(c\) excluded in the second.  For such a fixed
\(c\), take \(\ell=0\) in the constructions above: then \(K=k\),
\(m=h\), \(\alpha=1\), \(\beta=d/c\), and
Proposition~\ref{prop:cf-to-poset} gives \(|R|\le k+s(h/c)\), an excess
over \(d/c\) of at most \(s(h/c)\le\max_{0\le m<c}s(m/c)\), a constant
depending only on \(c\).  Increasing \(C_\eps\) to absorb the maximum of
these finitely many constants gives the stated bound
\[
        \nu(c,d)\le \frac{d}{c}+C_\eps\,\log(d+2)\,\log\log(d+3)
\]
in all cases.  For \(d\ge 2c\) only the range \(d/c\ge 2\) and finitely
many small \(c\) arise, and there the constant is absolute.  This proves
Theorem~\ref{thm:main}.\qed

\section{Optimality of the hypothesis for this method}
\label{sec:sharp}

Proposition~\ref{prop:sharp-intro} rests on an elementary lower bound.
For \(1\le a<c\), if \(a/c=[0;c_1,c_2,\dots]\) then the first partial
quotient is \(c_1=\lfloor c/a\rfloor\), so \(s(a/c)\ge\lfloor c/a\rfloor\).

\begin{lemma}[Continuant floor]
\label{lem:floor}
For all integers \(c>h\ge 1\) and all \(\ell\) with \(0\le\ell\le h\),
\[
        s(\ell/c)+s\bigl((h-\ell)/c\bigr)\ \ge\ \Bigl\lfloor \frac{c}{h}\Bigr\rfloor
        \ \ge\ \frac{c}{h}-1.
\]
\end{lemma}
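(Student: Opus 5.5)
The plan is a case split on whether \(\ell\) lies at an endpoint of \(\{0,\dots,h\}\), using only the elementary fact recorded just before the lemma: for \(1\le a<c\), the first partial quotient of \(a/c\) is \(\lfloor c/a\rfloor\), so \(s(a/c)\ge\lfloor c/a\rfloor\). Since all partial quotients are positive, \(s\) is nonnegative. It therefore suffices to find one of the two fractions \(\ell/c\), \((h-\ell)/c\) whose numerator \(a\) satisfies \(1\le a\le h\). Then \(s(a/c)\ge\lfloor c/a\rfloor\ge\lfloor c/h\rfloor\), and the other summand only helps.

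First suppose \(\ell=0\) or \(\ell=h\). Then one fraction is \(0\), with \(s(0)=0\), and the other is \(h/c\). Because \(1\le h<c\), we get \(s(h/c)\ge\lfloor c/h\rfloor\), which is the claim.

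Next suppose \(1\le\ell\le h-1\). Both numerators lie in \([1,h-1]\), so each is at most \(h<c\). Monotonicity of \(a\mapsto\lfloor c/a\rfloor\) gives
\[
s(\ell/c)+s\bigl((h-\ell)/c\bigr)\ge\lfloor c/\ell\rfloor+\lfloor c/(h-\ell)\rfloor\ge 2\lfloor c/h\rfloor\ge\lfloor c/h\rfloor.
\]
This is even stronger than needed. One could also use \(\min(\ell,h-\ell)\le h/2\) here, but that is unnecessary.

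The final inequality \(\lfloor c/h\rfloor\ge c/h-1\) is the standard bound \(\lfloor t\rfloor>t-1\).

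The only point needing care is the convention for \(s\) on non-reduced fractions. The paper fixes that \(s\) is computed from the reduced form, so I must check that the first-partial-quotient bound holds for \(a/c\) with \(\gcd(a,c)=g>1\). Write \(a/c=a'/c'\) with \(a'=a/g\) and \(c'=c/g\). The first partial quotient is \(\lfloor c'/a'\rfloor=\lfloor c/a\rfloor\), since the ratio is unchanged. So the bound \(s(a/c)\ge\lfloor c/a\rfloor\) is valid regardless of coprimality, and there is no real obstacle.
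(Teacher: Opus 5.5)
Your proof is correct and is essentially the paper's argument: both rest on \(s(a/c)\ge\lfloor c/a\rfloor\) for \(1\le a<c\) together with the monotonicity of \(a\mapsto\lfloor c/a\rfloor\), and the paper avoids your case split by taking \(b:=\max(\ell,h-\ell)\), which automatically satisfies \(1\le b\le h<c\). Your check that the first-partial-quotient bound still holds after reducing a non-coprime fraction is sound, and it is a detail the paper leaves implicit.
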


\begin{proof}
The numerators \(\ell\) and \(h-\ell\) are nonnegative and sum to
\(h\ge 1\), so \(b:=\max(\ell,h-\ell)\) satisfies \(1\le b\le h<c\).
Because \(s(\cdot)\ge 0\) and \(b\) is one of the two numerators,
\[
        s(\ell/c)+s\bigl((h-\ell)/c\bigr)\ \ge\ s(b/c)\ \ge\ \Bigl\lfloor \frac{c}{b}\Bigr\rfloor
        \ \ge\ \Bigl\lfloor \frac{c}{h}\Bigr\rfloor\ \ge\ \frac{c}{h}-1,
\]
using \(s(b/c)\ge\lfloor c/b\rfloor\) since \(1\le b<c\), and
\(\lfloor c/b\rfloor\ge\lfloor c/h\rfloor\) since \(b\le h\).
\end{proof}

\begin{proof}[Proof of Proposition~\ref{prop:sharp-intro}]
Let \(1<d/c<2\) with \(\gcd(c,d)=1\) and \(h=d-c\).  In this range the
construction of Section~\ref{sec:proof} is the first case, which combines
two blocks \(P,Q\) with \(\rho(P,x)=\alpha=1+\ell/c\) and
\(\rho(Q,y)=\beta=1+(h-\ell)/c\) for some \(\ell\in\{0,\dots,h\}\).  That
both blocks carry integer part \(1\) is forced: each ratio is at least
\(1\), and if either exceeded \(2\) then, since
\(\alpha+\beta=d/c+1<3\), the other would be less than \(1\), which is
impossible.  Thus, over the family of two-block overlap constructions
that use denominator \(c\), the only freedom is the choice of \(\ell\),
and Proposition~\ref{prop:cf-to-poset} bounds the resulting size by
\[
        |R|\ \le\ 1+s(\ell/c)+s\bigl((h-\ell)/c\bigr).
\]
By Lemma~\ref{lem:floor} the right-hand side is at least
\(1+\lfloor c/h\rfloor\) for every \(\ell\).  Consequently the
construction certifies \(|R|=O(\log c\log\log c)\), which in this range is
what \(\nu(c,d)\le d/c+O(\log d\log\log d)\) demands, only when
\(\lfloor c/h\rfloor=O(\log c\log\log c)\), that is
\(h=\Omega\bigl(c/(\log c\log\log c)\bigr)\).  If \(h/c\to 0\) faster than
this rate, then \(\lfloor c/h\rfloor\) exceeds any fixed multiple of
\(\log c\log\log c\) for large \(c\), so no choice of \(\ell\) meets the
bound, and the hypothesis \(d\ge(1+\eps)c\) cannot be dropped by this
construction.  Conversely, when \(h\ge\eps c\) one has
\(\lfloor c/h\rfloor\le 1/\eps\), in agreement with
Theorem~\ref{thm:main}.
\end{proof}

\section{A note on formal verification}
\label{sec:verify}

The deductive core of the argument has been checked with the Lean~4 proof
assistant.  The script uses only Lean's standard library, contains no
unproved assertions (no \texttt{sorry}) and no added axioms, and is
provided as an ancillary file.

The following are proved in Lean with no hypotheses: the continuant floor
bound (Lemma~\ref{lem:floor}), and the two residue-counting steps of
Section~\ref{sec:proof}, namely the injection \(\ell\mapsto h-\ell\) on
\(\{0,\dots,h\}\) and the injectivity of the map
\(\ell\mapsto(h-\ell)\bmod c\) on
\(\ZZ/c\ZZ\), together with the pigeonhole choice of a good residue.  The
proof of Theorem~\ref{thm:main} is formalized conditionally, in the
following sense.  The two imported constructions, the continued-fraction
poset of Proposition~\ref{prop:cf-to-poset} and the overlap-sum identity
of Lemma~\ref{lem:overlap}, are supplied to Lean as hypotheses, as is
Rukavishnikova's bound in the quantitative form actually used (that the
number of bad residues is a strict fraction of the modulus).  Granting
these three inputs, the case analysis of Section~\ref{sec:proof},
including every size estimate and every instance of the identity
\(\alpha+\beta-1=d/c\) in cleared-denominator integer form, is proved as a
Lean theorem; the small-denominator step is formalized as a single direct
application of Proposition~\ref{prop:cf-to-poset}, which yields the same
bound as the \(\ell=0\) route described in Section~\ref{sec:proof}.  Thus
the passage from the cited results to
Theorem~\ref{thm:main} is machine-checked and contains no gaps.

Three ingredients are deliberately not formalized, and are used as cited
or as separately justified: the proofs of
Proposition~\ref{prop:ruk} and Proposition~\ref{prop:cf-to-poset}; the
routine passage from the quantitative bound to the asymptotic form
\(d/c+O(\log d\log\log d)\); and the general proof of the overlap-sum
identity, Lemma~\ref{lem:overlap}.  For the last of these, the
construction of Section~\ref{sec:overlap} is implemented verbatim in the
script, and the two counting identities in its proof, the denominator
identity \(e(R-z)=e(P-x)\,e(Q-y)\) and the numerator identity
\[
        e(R)=e(P)\,e(Q-y)+e(P-x)\,e(Q)-e(P-x)\,e(Q-y),
\]
are verified by exhaustive enumeration of linear extensions over all
pairs of posets on three elements with a minimal distinguished element in
each, together with three larger instances.  These computational checks
use no axioms.  The script was run under Lean~4.15.0 and under a current
toolchain.

\section{Remarks and the remaining obstruction}
\label{sec:remarks}

The proof gives a uniform improvement throughout \(d/c\ge 2\), with no
fixed-gap hypothesis and an absolute constant.  In that range the residue
\(\ell\) may be chosen freely modulo \(c\), and the two bad sets \(B_c\)
and \(h-B_c\) together miss all of \(\ZZ/c\ZZ\).  The hypothesis
\(d\ge(1+\eps)c\) is used only when \(1<d/c<2\), where \(\ell\) must lie
in the interval \(\{0,\dots,h\}\) with \(h=d-c\).  For fixed \(\eps>0\)
this interval has length at least \(\eps c\), and the global density-one
estimate is enough.  When \(d/c\to 1\), however, \(h=o(c)\), and
Proposition~\ref{prop:sharp-intro} shows the construction stalls.

\medskip
\noindent\textbf{Removing the hypothesis.}
The obstruction is a short-interval question for sums of partial
quotients: given \(h=o(c)\), is there \(\ell\in\{0,\dots,h\}\) with both
\(s(\ell/c)\) and \(s((h-\ell)/c)\) of order \(\log c\log\log c\), or
smaller?  A sufficiently uniform affirmative answer would remove the
fixed-gap hypothesis from the present method.  One apparent way to relax
the search is to let the two blocks carry different denominators, writing
\(\alpha=1+p/q\) and \(\beta=K+r/s\) with \(p/q+r/s=h/c\); but a common
denominator of \(p/q\) and \(r/s\) is a multiple of \(c\), which tends to
force one of \(q,s\) to be of order \(\sqrt c\), and the task of writing
the fixed rational \(h/c\) as a sum of two fractions of small
partial-quotient weight is itself a question about fractions with bounded
partial quotients, in the circle of Zaremba's conjecture
\cite{Zaremba1972,BourgainKontorovich2014}.  A genuinely different poset
construction could also bypass the short-interval search.

\medskip
\noindent\textbf{The error term.}
The argument does not address the sharper bound
\[
        \nu(c,d)\le \frac{d}{c}+O(\log d),
\]
conjectured by Chan and Pak.  The error term here equals
\(s(\ell/c)+s(m/c)\), and the factor \(\log\log d\) is exactly the typical
size of the continued-fraction weight supplied by
Proposition~\ref{prop:ruk}.  To reach an \(O(\log d)\) error by the same
strategy one would need residues whose partial quotients are bounded on
average, that is, of weight \(O(\log c)\); the extreme case of partial
quotients bounded by an absolute constant is Zaremba's conjecture, known
for a density-one set of denominators by Bourgain and Kontorovich
\cite{BourgainKontorovich2014}.  A construction whose size is governed by
a different statistic of the fraction could also help.  We do not pursue
either improvement here.

\section*{Acknowledgments}
The author used OpenAI's ChatGPT 5.5 and Anthropic's Claude Opus 4.8 as research aids in the development and exposition of this paper, including its mathematical arguments. The author has independently verified every argument and every reference, and takes full responsibility for the contents.

\section*{License}
This work is licensed under the Creative Commons Attribution 4.0 International License (CC BY 4.0).

% NOTE: in references.bib, update the entry ChanPak2024 to cite the
% published version alongside the preprint:
%   S. H. Chan and I. Pak, Linear extensions and continued fractions,
%   European J. Combin. 122 (2024), Paper No. 104018;
%   doi:10.1016/j.ejc.2024.104018; arXiv:2401.09723.
\bibliographystyle{alpha}
\bibliography{ghodsi_overlap_relative_linear_extension_ratios_references_v1}

\end{document}